 \newtheorem{theorem}{Theorem}
\newtheorem{lemma}[theorem]{Lemma}
\newtheorem{cor}[theorem]{Corollary}
\title{On the directions determined by a Cartesian product in an affine Galois plane}
\author{Daniel Di Benedetto\thanks{dibenedetto@math.ubc.ca} \qquad J\'ozsef Solymosi\thanks{solymosi@math.ubc.ca} \qquad Ethan P. White\thanks{epwhite@math.ubc.ca}\\
Department of Mathematics\\
The University of British Columbia \\
Vancouver, BC\\
 Canada V6T 1Z2}
\begin{document}

\maketitle

\abstract{We prove that the number of directions contained in a set of the form $A \times B \subset AG(2,p)$, where $p$ is prime, is at least $|A||B| - \min\{|A|,|B|\} + 2$. Here $A$ and $B$ are subsets of $GF(p)$ each with at least two elements and $|A||B| <p$. This bound is tight for an infinite class of examples. Our main tool is the use of the R\'edei polynomial with Sz\H{o}nyi's extension. As an application of our main result, we obtain an upper bound on the clique number of a Paley graph, matching the current best bound obtained recently by Hanson and Petridis.} 

\section{Introduction} 

Let $U$ be a subset of the Desargusian affine plane $AG(2,p)$, where $p$ is a prime number. A direction is \emph{determined} by $U$ if two points of $U$ lie on a line in that direction. We can coordinatize $AG(2,p)$ so that $U = \{(a_i,b_i) \colon 1 \leq i \leq |U|\}$, where $a_i,b_i \in GF(p)$ for all $1 \leq i \leq |U|$, and then the set of directions determined by $U$ is given by
\[ D = \left\{ \frac{b_i-b_j}{a_i-a_j} \colon 1 \leq i<j \leq n \right\}.\]
Note that $D$ is a subset of $GF(p)\cup \{\infty\}$. 

The possible values of $|D|$ have been studied by various authors. For a survey of results on this topic see \cite{ts2} and \cite{BB}. A key tool in this area are the properties of \emph{lacunary polynomials}, which are polynomials with several consecutive coefficients equal to zero. R\'edei's monograph \cite{r}, as well as Ball and Blokhuis's chapter \cite{BB} contain many results on lacunary polynomials and their applications, one of which is a sharp lower bound of $(p+3)/2$ on the size of $D$ for sets of size exactly $p$, excepting lines. The size of $|D|$ has also been considered in the setting $AG(2,q)$, $q$ a prime power, see for example \cite{bbbss}. 

R\'edei's method was later extended by Sz\H{o}nyi \cite{ts} to sets of size smaller than $p$. Our result uses Sz\H{o}nyi's extension of R\'edei's method and relies on the fact that in the case when the set is a Cartesian product, the relevant polynomials have a very special structure that can be exploited. We prove the following theorem, improving Sz\H{o}nyi's bound by a factor of two (up to lower order terms) for Cartesian product sets.

\begin{theorem}\label{t1}
Let $A,B\subset GF(p)$ be sets each of size at least two such that $|A||B| < p$. Then the set of points $A\times B\subset AG(2,p)$ determines at least $|A||B| - \min\{|A|,|B|\} + 2$ directions.
\end{theorem}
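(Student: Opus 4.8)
The plan is to exploit the product structure of the Rédei polynomial. Normalize by translations so that $0 \in A$ and $0 \in B$, set $s = \min\{|A|,|B|\}$, and after possibly interchanging the two coordinate axes assume $s = |A| \le |B| =: t$, so that $n := |A||B| = st$. Writing $g(Z) = \prod_{b \in B}(Z - b)$, form the Rédei polynomial
\[
R(X,Y) = \prod_{a \in A}\prod_{b \in B}\bigl(X + aY - b\bigr) = \prod_{a \in A} g(X + aY),
\]
which is monic of degree $n$ in $X$ and whose coefficient of $X^{n-j}$ is a polynomial in $Y$ of degree at most $j$. The basic dictionary is that a finite direction $m$ is \emph{not} determined precisely when the $n$ values $\{\,b - am : a \in A,\ b \in B\,\}$ are pairwise distinct, that is, precisely when $R(X,m)$ is squarefree; since $n < p$ this is in turn equivalent to $R(X,m)\mid X^p - X$. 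The product form $R = \prod_{a} g(X + aY)$ says that each fibre $R(X,m)$ is a product of $s$ shifts of the single polynomial $g$, and non-determinedness of $m$ is exactly the statement that these $s$ shifts have pairwise disjoint root sets.

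I would first record the base case $s = 2$ both as motivation and as a check on the constant. Fixing the two elements $a_1 \neq a_2$ of $A$, every quotient $\tfrac{b - b'}{a_1 - a_2}$ with $b,b' \in B$ is a determined direction, so the determined finite directions contain the scaled difference set $\tfrac{1}{a_1 - a_2}(B - B)$ and therefore number at least $|B - B| \ge 2|B| - 1 = st - s + 1$; together with the vertical direction this already proves the theorem when $|A| = 2$. For $|A| \ge 3$ the analogous elementary estimate, taking the union of the scaled difference sets $\tfrac{1}{a - a'}(B - B)$ over pairs in $A$, degrades because these sets can overlap badly (most sharply when $A$ is symmetric, where $\tfrac{1}{a}(B-B) = \tfrac{1}{-a}(B-B)$), so I would pass to the polynomial method to get the bound uniformly in $s$.

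The heart of the argument is Szőnyi's extension applied to $R$. Assume for contradiction that $A \times B$ determines fewer than $st - s + 2$ directions, so that the set $M$ of non-determined finite directions is large. For each $m \in M$ we have $X^p - X = R(X,m)\,h_m(X)$ with $\deg h_m = p - n$, and the idea of Szőnyi's extension is to package the complementary factors $h_m$, together with $R$, into a controlled bivariate object whose coefficients are polynomials in $Y$ of small degree. The assumed abundance of non-determined directions then forces these coefficients to vanish at too many points, which translates into a lacunary (large-gap) structure for $R(X,Y)$ in the variable $X$. This is exactly where the special structure $R = \prod_a g(X + aY)$ is used: the forced lacunarity is a rigidity statement about how the $s$ shifts of $g$ must interlock, and the governing degree is that of a single block $g$, namely $t$, rather than $n/2$ — this is the source of the factor-of-two improvement over Szőnyi's general $(n+3)/2$ estimate.

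The main obstacle, and the step I expect to demand the most care, is converting this forced rigidity into the exact constant $st - s + 2$. Here I would invoke Rédei's theory of fully reducible lacunary polynomials: the maximal-overlap condition on $g$ and its $s$ shifts should force $B$, and by symmetry $A$, to be arithmetic progressions and simultaneously pin down the count, with the correction $-\min\{|A|,|B|\}$ arising as the slack contributed by the $s$ parallel classes of lines, which determine no new directions among themselves. Throughout, the hypothesis $|A||B| < p$ must be used in three ways: it guarantees that the $n$ values $b - am$ can be genuinely distinct, it keeps every relevant $X$-degree below $p$ so that the lacunary gap in $R$ is real rather than an artifact of reduction modulo $X^p - X$, and it prevents the extremal arithmetic progressions from wrapping around modulo $p$. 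I would close by verifying tightness directly: for $A$ and $B$ arithmetic progressions the bound is attained with equality, matching the extremal case of the lacunary polynomial theorem.
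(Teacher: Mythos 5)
Your setup is right and your $|A|=2$ base case is a correct, self-contained argument, but for $|A|\ge 3$ what you have is a plan rather than a proof, and the plan's endgame is not viable. The decisive move in the paper is not to package the complementary factors $h_m(X)$ over the non-determined directions $m\in M$ (that is Sz\H{o}nyi's general machinery, which only yields the vanishing of the low-degree coefficients $h_i(y)$ as polynomials in $y$); it is to then \emph{specialize to the single determined direction} $y=0$, where the product structure collapses the R\'edei polynomial to a perfect power $H(x,0)=\bigl(\prod_{b\in B}(x-b)\bigr)^{|A|}$. Writing $H(x,0)f(x,0)=x^p+c_1x^{p-1}+\cdots+c_p$ and assuming $c_1=\cdots=c_{k+n-1}=0$, one reverses coefficients to get an identity of the form $R^m(y)S(y)=1+y^{\deg R+\deg S+1}P(y)$ with $R(y)=\prod_j(1-b_jy)$ squarefree; differentiating and counting degrees forces $mR'S+RS'=0$, hence $R\mid S$ (after extracting the largest power of $R$ from $S$ and checking the exponent stays nonzero mod $p$, which is where $|A||B|<p$ enters), and this contradicts $\deg R\ge 1$. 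That logarithmic-derivative lemma is the engine of the whole proof and the sole place the factor-of-two improvement is extracted; your proposal never identifies it, nor the specialization at $y=0$ that makes it applicable.

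Moreover, the rigidity statement you propose to close with --- that the extremal configurations are arithmetic progressions, to be extracted from R\'edei's classification of fully reducible lacunary polynomials --- is contradicted by the paper's own tight example: for $p=41$ and $A=\{0,1,5,9,10\}$ (a maximal Paley clique, not an arithmetic progression), $A\times A$ determines exactly $5^2-5+2=22$ directions. So no argument that pins down the constant $|A||B|-\min\{|A|,|B|\}+2$ by forcing $A$ and $B$ to be progressions can succeed. The constant instead falls out of pure degree bookkeeping: $\deg R=n-1$, $\deg S\le k=p-mn$, so the first possibly-nonzero coefficient is $c_{k+n}$, giving at least $p-(k+n)+2=mn-n+2$ directions, and the $\min$ comes from repeating the argument after a $90^\circ$ rotation.
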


Let $d>1$ be a divisor of $p-1$ and let $Z_d$ be a multiplicative subgroup of size $d$ inside $GF(p)$. If a set $A$ satisfies $A-A\subset Z_d\cup\{0\}$, then all of its directions are elements of $Z_d\cup\{0,\infty\}$. Thus, as a corollary of Theorem 1, we obtain the following result, which was recently proved by Hanson and Petridis \cite{hp} using Stepanov's method.

\begin{cor}
Let $A\subset GF(p)$ be a set such that $A-A\subset Z_d\cup\{0\}$. Then
\[ |A|(|A|-1)\leq d.\]
\end{cor}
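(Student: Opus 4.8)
The plan is to deduce the corollary from Theorem~\ref{t1} by applying it to the Cartesian square $A\times A\subset AG(2,p)$ and then pitting the lower bound it provides against the structural \emph{upper} bound on the number of directions that is forced by the hypothesis $A-A\subset Z_d\cup\{0\}$. In other words, the two sides of the desired inequality will come from two different descriptions of the same quantity: the number of directions determined by $A\times A$.

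First I would record the elementary observation (already noted in the text preceding the statement) that every direction determined by $A\times A$ lies in $Z_d\cup\{0,\infty\}$. A finite, non-vertical direction has the form $\tfrac{b-b'}{a-a'}$ with $a,a',b,b'\in A$ and $a\neq a'$; the denominator is a nonzero element of $A-A$, hence lies in $Z_d$, while the numerator lies in $Z_d\cup\{0\}$. Since $Z_d$ is a multiplicative subgroup of $GF(p)^\ast$ it is closed under division, so the quotient is either $0$ (when $b=b'$) or again an element of $Z_d$ (when $b\neq b'$); adjoining the vertical direction $\infty$ shows that $A\times A$ determines at most $|Z_d|+2=d+2$ directions. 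Next I would invoke Theorem~\ref{t1} with $B=A$: provided $|A|\ge 2$ and $|A|^2<p$, it guarantees at least $|A||A|-\min\{|A|,|A|\}+2=|A|(|A|-1)+2$ directions. Comparing the two bounds yields $|A|(|A|-1)+2\le d+2$, i.e.\ $|A|(|A|-1)\le d$, which is exactly the claim; the cases $|A|\le 1$ are trivial since then the left-hand side is $0\le d$.

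The one place where care is needed, and the only genuine obstacle, is checking the size hypothesis $|A|^2<p$ that Theorem~\ref{t1} requires when $B=A$. In the range where the conclusion carries content this is automatic, and I would justify it by a subset reduction: if $|A|^2\ge p$, pass to a subset $A'\subseteq A$ with $|A'|=\lfloor\sqrt{p}\rfloor$, observe that $A'-A'\subseteq A-A\subseteq Z_d\cup\{0\}$ so the hypothesis is inherited, and run the argument above for $A'$ (which now satisfies $|A'|^2<p$). The resulting inequality $|A'|(|A'|-1)\le d$ with $|A'|\approx\sqrt{p}$ forces $d\ge p-O(\sqrt p)$, which does not occur for the subgroup sizes relevant to the application—most notably $d=(p-1)/2$ for the Paley graph, where one instead concludes $|A|^2<p$ and hence the three-line argument applies directly. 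I expect this verification of the applicability range to be the only subtlety; once it is in place, the corollary follows immediately from the subgroup closure property and Theorem~\ref{t1}.
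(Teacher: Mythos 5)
Your proof is correct and follows essentially the same route as the paper, which in fact only sketches the argument in the sentence preceding the corollary: every direction of $A\times A$ lies in $Z_d\cup\{0,\infty\}$ because $Z_d$ is closed under division, so comparing the upper bound $d+2$ with the lower bound $|A|(|A|-1)+2$ from Theorem~\ref{t1} gives the claim. Your extra attention to the hypothesis $|A|^2<p$ is a real point the paper glosses over; just note that your subset reduction leaves open precisely the degenerate case $d=p-1$, where $Z_d\cup\{0\}=GF(p)$, the hypothesis is vacuous, and the stated inequality actually fails for $A=GF(p)$ --- so the corollary must be read with $d$ a proper divisor of $p-1$, and for such $d$ (and $p$ not too small) your reduction does close the gap.
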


In particular, if $p$ is congruent to 1 modulo 4 and $d=(p-1)/2$ this gives an upper bound of $(\sqrt{2p-1}+1)/2$ on the clique number of the Paley graph $G_p$. Recall that the vertices of $G_p$ are the elements of $GF(p)$ with an edge between elements whose difference is a square in $GF(p)$.

Estimating the size of sets of the form $(A-A)/(A-A)$ played a crucial role in early sum-product estimates over finite fields, \cite{g,ks}, and it is still an important tool in proving sum-product type bounds (see e.g. in \cite{mprs}). For $A \subset GF(p)$ with $|A|^2 < p$, Theorem~\ref{t1} gives that 
\[ \#\left\{ \frac{a-b}{c-d} \colon (a,b,c,d) \in A^4, a \neq b, c\neq d \right\} \geq |A|^2-|A|.  \]
Hence there is a nonzero $x \in (A-A)/(A-A)$ such that the number of representations $x = (a-b)/(c-d)$ with $(a,b,c,d) \in A^4$ is at most $|A|^2-|A|$. Consider the set $A-x A = \{\alpha - x \beta \colon (\alpha,\beta) \in A^2 \}$. The number of representations of a $y \in \alpha - x \beta$ cannot be too high on average, otherwise $x$ would have many representations in $(A-A)/(A-A)$. This idea can be made rigorous using the Cauchy-Schwarz inequality. The corresponding result is recorded in the following corollary.

\begin{cor} Let $A$ be a subset of $GF(p)$ such that $|A|^2 <p$. There exist $a,b,c,d \in A$ such that $|(a-b)A + (c-d)A| \geq |A|^3/(2|A|-1)$. 
\end{cor}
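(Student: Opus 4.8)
The plan is to prove Corollary 3 (the final statement) by combining the direction-counting bound from Theorem~\ref{t1} with a Cauchy--Schwarz averaging argument, exactly as sketched in the paragraph preceding the statement. Let me work through how I would set this up.

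First I would apply Theorem~\ref{t1} to the Cartesian product $A \times A \subset AG(2,p)$, which is legitimate since $|A|^2 < p$. With $|A| = |B| = |A|$, the theorem gives that $A \times A$ determines at least $|A|^2 - |A| + 2$ directions. The nonzero finite directions are precisely the elements of the quotient set $(A-A)/(A-A) \setminus \{0\}$; discarding the direction $0$ and the direction $\infty$ (which certainly occur since $|A| \ge 2$), I obtain
\[ \#\left\{ \frac{a-b}{c-d} \colon (a,b,c,d) \in A^4,\ a \neq b,\ c\neq d \right\} \geq |A|^2-|A|, \]
which is the displayed inequality in the excerpt.

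The key step is an averaging (pigeonhole) argument. I would count the quadruples $(a,b,c,d) \in A^4$ with $a \ne b$ and $c \ne d$: there are exactly $|A|^2(|A|-1)^2$ of them, and each such quadruple produces a nonzero value of $(a-b)/(c-d)$. Since these values lie in a set of size at least $|A|^2 - |A| = |A|(|A|-1)$, there must exist some fixed nonzero $x \in (A-A)/(A-A)$ whose number of representations $x = (a-b)/(c-d)$ is at most
\[ \frac{|A|^2(|A|-1)^2}{|A|(|A|-1)} = |A|(|A|-1) = |A|^2 - |A|. \]
Now I fix this particular $x$, and write $x = (c-d)/(a-b)$ for some $a,b,c,d \in A$ (inverting so that the target set $A - xA$ matches the corollary's form; I would keep careful track of which difference goes where so the final $(a-b)A + (c-d)A$ reads correctly). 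The point is that a representation $x = (\alpha - \beta)/(\gamma - \delta)$ is equivalent to a collision $\alpha - x\gamma = \beta - x\delta$ in the set $A - xA = \{\alpha - x\gamma : (\alpha,\gamma) \in A^2\}$.

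To finish, I would apply Cauchy--Schwarz to the $|A|^2$ pairs $(\alpha,\gamma) \in A^2$ mapping into $A - xA$. If $r(y)$ denotes the number of representations of $y \in A - xA$, then $\sum_y r(y) = |A|^2$ and $\sum_y r(y)^2$ counts exactly the solutions to $\alpha - x\gamma = \beta - x\delta$, i.e. the representations of $x$ as a quotient, which I bounded above by $|A|^2 - |A| = |A|(2|A|-1)/\!\sim$; more precisely I should track the count so the additive-energy quantity is bounded by something of order $|A|(|A|-1) + |A|^2$ (the diagonal $\alpha = \beta, \gamma = \delta$ contributes $|A|^2$). Cauchy--Schwarz then gives
\[ |A-xA| \geq \frac{\left(\sum_y r(y)\right)^2}{\sum_y r(y)^2} = \frac{|A|^4}{\sum_y r(y)^2} \geq \frac{|A|^4}{|A|^2 + |A|(|A|-1)} = \frac{|A|^4}{2|A|^2 - |A|} = \frac{|A|^3}{2|A|-1}, \]
which after rewriting $A - xA$ as $(c-d)A + (a-b)A$ for the chosen $a,b,c,d$ (clearing the denominator in $x$) yields the stated bound. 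The main obstacle, and the place where I would need to be most careful, is the bookkeeping in the energy bound: I must correctly separate the diagonal contribution from the off-diagonal collisions and make sure the off-diagonal count is precisely the quantity controlled by the low-representation choice of $x$, so that the two contributions sum to $2|A|^2 - |A|$ rather than something slightly larger. Everything else is a direct application of Theorem~\ref{t1} and standard Cauchy--Schwarz.
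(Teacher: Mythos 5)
Your proposal is correct and follows essentially the same route as the paper, which only sketches this argument in the paragraph preceding the corollary: apply Theorem~\ref{t1} to $A\times A$, pigeonhole to find a quotient $x$ with at most $|A|^2-|A|$ representations, and bound $|A-xA|$ by Cauchy--Schwarz with the diagonal contributing $|A|^2$ and the off-diagonal contributing the representation count, giving $|A|^4/(2|A|^2-|A|)=|A|^3/(2|A|-1)$. Your bookkeeping of the energy is the correct one, so there is nothing to add beyond noting the trivial case $|A|=1$.
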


Our arguments for Cartesian products can be extended to a set in $GF(p)$ consisting of a union of two Cartesian products. A corollary of this is as follows. 

\begin{cor}\label{cu} Let $A,B \subset GF(p)$ be disjoint sets each of size at least two such that each of the difference sets $A-A$, $A-B$, $B-B$ contain either only squares, or only non-squares, in addition to $0$. Then
\[ \min\{|A|^2-2|A|,|B|^2-2|B|\} + |A||B| + 2 \leq \frac{p+3}{2}.\]

\end{cor}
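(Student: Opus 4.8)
The plan is to encode the hypotheses geometrically and reduce Corollary~\ref{cu} to two estimates for the single point set
\[ U = (A\times A)\cup(B\times B)\subset AG(2,p),\]
an upper bound of $(p+3)/2$ and a lower bound of $\min\{|A|^2-2|A|,|B|^2-2|B|\}+|A||B|+2$ on the number of directions $U$ determines; comparing the two gives the corollary at once. For the upper bound, note that since $A$ and $B$ are disjoint the factors $A\times A$ and $B\times B$ are disjoint point sets, and every determined direction is $0$, $\infty$, or a ratio $\beta/\alpha$ in which $\alpha,\beta$ lie in a common block among $A-A$, $B-B$, $A-B$: two points of $U$ in the same factor give numerator and denominator both from $A-A$ or both from $B-B$, while two points from different factors give numerator and denominator both from $A-B$. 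By hypothesis each block consists of nonzero squares together with $0$, or of non-squares together with $0$, so in either case a ratio of two nonzero elements of one block is a nonzero square. Hence every finite nonzero direction determined by $U$ is a square, and $|D|\le (p-1)/2+2=(p+3)/2$.

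For the lower bound I would run the mechanism behind Theorem~\ref{t1} on $U$ directly. Writing $f_A(T)=\prod_{a\in A}(T-a)$ and $f_B(T)=\prod_{b\in B}(T-b)$, the R\'edei polynomial of $U$ factors as
\[ R(X,Y)=\Big(\prod_{a\in A}f_A(X+aY)\Big)\Big(\prod_{b\in B}f_B(X+bY)\Big)=R_A(X,Y)\,R_B(X,Y),\]
where $R_A,R_B$ are the R\'edei polynomials of $A\times A$ and $B\times B$. A direction $y$ is determined exactly when $R(X,y)$ has a repeated root, and these repeated roots fall into three families: coincidences inside $R_A$ (the directions of $A\times A$, namely $0$, $\infty$, and the ratio set $Q_A=(A-A)^{*}/(A-A)^{*}$, where $^{*}$ denotes deletion of $0$); coincidences inside $R_B$ (giving $0,\infty$ and $Q_B$); and cross coincidences between a factor of $R_A$ and a factor of $R_B$. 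A one-line computation shows the cross coincidences are exactly the directions $(a'-b')/(a-b)$ with $a,a'\in A$, $b,b'\in B$, i.e. the ratio set $Q_{AB}=(A-B)/(A-B)$, all of which are nonzero and finite because $A\cap B=\varnothing$. Thus the determined directions are $\{0,\infty\}\cup Q_A\cup Q_B\cup Q_{AB}$, and Theorem~\ref{t1} (applied with both factors equal to $A$, resp.\ $B$) already yields $|Q_A|\ge|A|^2-|A|$ and $|Q_B|\ge|B|^2-|B|$. Assuming without loss of generality $|A|\le|B|$, so that $\min\{|A|^2-2|A|,|B|^2-2|B|\}=|A|^2-2|A|$, it then suffices to exhibit $|A|(|B|-1)$ cross directions lying outside $Q_A$; adjoining these to $\{0,\infty\}\cup Q_A$ produces the claimed total $|A|^2-2|A|+|A||B|+2$.

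The main obstacle is exactly this cross count, which I would handle by re-running Sz\H{o}nyi's extension of the R\'edei method \cite{ts} for the mixed product $R_AR_B$, tracking the coincidences between its two factors. By analogy with Theorem~\ref{t1}, where the product $A\times B$ has defect $\min\{|A|,|B|\}$, the cross family should carry defect $\min\{|A|,|B|\}=|A|$ and hence contribute at least $|A||B|-|A|$ directions; the delicate point is that the lacunary coefficient structure exploited for $A\times A$ is perturbed by the factor $R_B$, and one must verify both that this defect is exactly $|A|$ and that the cross directions are essentially disjoint from $Q_A$ (so that the three families genuinely add rather than overlap). Here it is important that the degree driving the cross interaction is of order $|A||B|$ rather than $|U|=|A|^2+|B|^2$, which keeps the argument within the admissible range even though $|U|$ may exceed $p$. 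Finally, the admissibility hypotheses $|A|^2,|B|^2<p$ needed to invoke Theorem~\ref{t1} follow from the already-established bound $|A|(|A|-1)\le (p-1)/2$: this applies directly when $A-A$ consists of squares, and after dilating $A$ by a non-square when $A-A$ consists of non-squares, and likewise for $B$. Combining the resulting lower bound with $|D|\le(p+3)/2$ gives Corollary~\ref{cu}.
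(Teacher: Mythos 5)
Your upper bound of $(p+3)/2$ is exactly the paper's, and your identification of the direction set of $U=(A\times A)\cup(B\times B)$ as $\{0,\infty\}\cup Q_A\cup Q_B\cup Q_{AB}$ is correct. The problem is the lower bound. Your plan reduces it to showing $|Q_{AB}\setminus Q_A|\ge |A||B|-|A|$, i.e.\ that the cross directions contribute $|A||B|-|A|$ elements beyond the $|A|^2-|A|$ guaranteed inside $Q_A$ by Theorem~\ref{t1}. That claim is precisely the content of the corollary, and it is not established: you say the cross family ``should'' carry defect $|A|$ ``by analogy'' with Theorem~\ref{t1}, and that the cross directions are ``essentially disjoint'' from $Q_A$, but under the hypotheses both $Q_A$ and $Q_{AB}$ live inside the set of nonzero squares and nothing in your argument controls their overlap. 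Separate lower bounds on $|Q_A|$ and $|Q_{AB}|$ cannot yield a lower bound on $|Q_A\cup Q_{AB}|$ of the required strength, since the union could a priori be as small as the larger of the two. So the decomposition into three families of coincidences, which is the organizing idea of your proof, is exactly the point at which the argument stops being a proof.

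The paper does not decompose at all: it runs the R\'edei--Sz\H{o}nyi machinery once on the whole union $U$. At $y=0$ the R\'edei polynomial is $H(x,0)=\prod_i(x-a_i)^m\prod_j(x-b_j)^n$; after substituting $x=y^{-1}$ one takes the \emph{mixed} squarefree factor $R(y)=\prod_i(1-a_iy)\prod_j(1-b_jy)$ of degree $m+n-1$ and absorbs the exponent imbalance $\left(\prod_i(1-a_iy)\right)^{m-n}$ into $S$, so that $R^{m+q}(S/R^q)=1+y^pG(y^{-1})$. A single application of Lemma~\ref{lem} then bounds $p-\deg G$, hence the total number of directions of $U$, with no need to separate internal from cross coincidences or to control overlaps between ratio sets. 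If you want to salvage your outline you must actually carry out this computation (or an equivalent one) for the mixed product; the analogy with Theorem~\ref{t1} is not a substitute, because the exponents on the linear factors of $H(x,0)$ are no longer all equal --- that is what forces the extra factor $\left(\prod_i(1-a_iy)\right)^{m-n}$ and the correction term $(m-1)(m-n)$ in the paper's final degree count.
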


For a subset $A \subseteq GF(p)$, the directions determined by $A \times \{0,1\}$ is the set $(A-A)\cup \{\infty\}$. Hence by Theorem~\ref{t1} we recover an instance of the well known Cauchy-Davenport Theorem~\cite{D}.

\begin{cor} Let $A \subseteq GF(p)$ be nonempty, then $|A-A| \geq \min\{p,2|A|-1\}$. 
\end{cor}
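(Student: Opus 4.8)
The plan is to apply Theorem~\ref{t1} with $B = \{0,1\}$ and translate the resulting bound on the number of directions into a bound on $|A-A|$. First I would carry out the translation by counting the directions determined by the point set $A \times \{0,1\}$, which consists of the pairs $(a,0)$ and $(a,1)$ for $a \in A$. The vertical line through $(a,0)$ and $(a,1)$ contributes the direction $\infty$; any two points at the same height contribute the direction $0$ (present as soon as $|A| \geq 2$); and a point $(a,0)$ together with $(a',1)$ for $a \neq a'$ contributes the direction $1/(a'-a)$. Since $t \mapsto 1/t$ is a bijection on $GF(p)^\times$ and the differences $a'-a$ range over $(A-A)\setminus\{0\}$, the finite nonzero directions number exactly $|A-A|-1$. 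Adjoining $0$ and $\infty$ gives a total of $|A-A|+1$ directions; equivalently, the reciprocal map (with $0 \leftrightarrow \infty$) puts the direction set in bijection with $(A-A)\cup\{\infty\}$.

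Next, assuming $|A| \geq 2$ and $|A||B| = 2|A| < p$, Theorem~\ref{t1} applies with $\min\{|A|,|B|\} = 2$ and guarantees at least $2|A| - 2 + 2 = 2|A|$ directions. Combining this with the count above yields $|A-A|+1 \geq 2|A|$, that is $|A-A| \geq 2|A|-1$. In this regime $2|A|-1 \leq p-2 < p$, so $\min\{p, 2|A|-1\} = 2|A|-1$ and the claimed inequality follows.

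The main point requiring care is the boundary, where the hypotheses of Theorem~\ref{t1} fail, so that Theorem~\ref{t1} cannot be invoked and a direct argument is needed. If $|A| = 1$ then $A-A = \{0\}$ and $|A-A| = 1 = \min\{p, 2|A|-1\}$, so the bound is immediate. The remaining case is $|A| \geq 2$ with $2|A| \geq p$, i.e. $|A| > p/2$; here I would argue by pigeonhole that $A-A = GF(p)$: for any $x \in GF(p)$ the sets $A$ and $x+A$ each have size exceeding $p/2$, so they must intersect, forcing $x \in A-A$. Thus $|A-A| = p$, which is at least $\min\{p, 2|A|-1\}$ since this minimum is always at most $p$. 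These three cases together cover every nonempty $A$, completing the proof. The genuine content lies entirely in the direction count feeding into Theorem~\ref{t1}; the pigeonhole case is routine but unavoidable because Theorem~\ref{t1} requires $|A||B| < p$.
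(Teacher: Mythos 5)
Your proof is correct and follows essentially the same route as the paper: apply Theorem~\ref{t1} to $A \times \{0,1\}$ and identify the direction set with $(A-A)\cup\{\infty\}$ (up to the reciprocal bijection), giving $|A-A|+1 \geq 2|A|$. The only difference is that you spell out the boundary cases $|A|=1$ and $2|A|\geq p$ (handled by pigeonhole), which the paper leaves implicit; this is a welcome but routine addition.
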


\section{R\'edei polynomials}

Let $U= \{(a_i,b_i) \colon 1 \leq i \leq |U|\}$ be a subset of the affine plane $AG(2,p)$, and $D$ be the set of directions determined by $U$. Suppose that $AG(2,p)$ is coordinatized so that $\infty \in D$. Put $n = |U|$. The R\'edei polynomial of $U$ is
\[ H(x,y) = \prod_{i=1}^n (x+a_iy-b_i) .\]

Consider $H_y(x) = H(x,y)$ as a polynomial with indeterminate $x$ and coefficients in $GF(p)[y]$. Define the set $A_y = \{-a_iy+b_i\}_{i=1}^n$. Observe that $H_y(x)$ divides $x^p-x$ if and only if the elements of $A_y$ are all distinct, and this is equivalent to $y \not\in D$. In the case $y \not\in D$, we see that $(x^p-x)/H_y(x)$ has a root at every element of $GF(p) \setminus A_y$, i.e. the coefficients of $(x^p-x)/H_y(x)$ are symmetric polynomials of the form $\sigma_k(GF(p)\setminus A_y)$, $k = 1,2,\ldots,p-n$. We can determine the symmetric polynomials $\sigma_k(GF(p)\setminus A_y)$ in terms of the symmetric polynomials $\{\sigma_i( A_y)\}_{i=1}^k$ recursively as follows. 

For $1 \leq k< p-1$ we have $\sigma_k(GF(p)) = 0$ and so 
\[ \sum_{i=0}^k \sigma_i(A_y)\sigma_{k-i}(GF(p)\setminus A_y) = 0.\]
This gives, for example
\[ \sigma_1(GF(p)\setminus A_y) = -\sigma_1(A_y), \quad \text{and} \quad \sigma_2(GF(p)\setminus A_y) = \sigma_1^2(A_y)-\sigma_2(A_y).\]
Continuing recursively we see that $\sigma_k(GF(p)\setminus A_y)$ is a polynomial in $GF(p)[y]$ of degree at most $k$ and can be defined even when the elements of $A_y$ are not all distinct. Put $m=p-n$ and define
\begin{equation}\label{extpoly} f(x,y) = x^m - \sigma_1(GF(p) \setminus A_y)x^{m-1} + \sigma_2(GF(p) \setminus A_y)x^{m-2} + \cdots +(-1)^m \sigma_m(GF(p) \setminus A_y). \end{equation}
Note that $f$ is a degree $m$ polynomial in $GF(p)[x,y]$ and crucially we have 
\[ H(x,y)f(x,y) = x^p-x\]
for all $y \not\in D$. 

For more on the construction and properties of $H$ and $f$ see \cite{ts,ts2}. Let
\[ H(x,y)f(x,y) = x^p + h_1(y)x^{p-1} + h_2(y)x^{p-2} + \cdots + h_p(y),\]
and note that deg$(h_i) \leq i$. Since $H(x,y)f(x,y) = x^p-x$, for every $y \not\in D$ we see that if $i \neq p-1$ then $h_i(y) = 0$ for all $y \not\in D$. Recall that there are $p+1$ directions in $AG(2,p$). Since $\infty \in D$, there are $p+1-|D|$ directions not in $D$, and all such directions are in $GF(p)$. This implies that $h_i \equiv 0$ if $i< p+1-|D|$. Equivalently, if $h_i \not\equiv 0$, then $|D| \geq p+1-i$. Therefore showing that there is a high degree term in this polynomial with a nonzero coefficient results in a lower bound on $|D|$. This is how we will proceed.

\medskip
\medskip

\section{Directions in Cartesian products}

Let $U$ be a Cartesian product set in $AG(2,p)$, i.e. there exists a coordinatization such that $U = A \times B$, where $A,B \subset GF(p)$. Assume that the elements of $A$ and $B$ are all distinct, and put $|A|=m$, $|B|=n$. Let $A = \{ a_i \colon 1 \leq i \leq m\}$ and $B = \{b_j \colon 1 \leq  j \leq n\}$. 

If $m=1$ or $n=1$ then $U$ is contained in a line and spans only one direction. Notice also that any subset of $AG(2,p)$ with at least $p+1$ elements determines all directions. This is because there are only $p$ parallel lines in each direction, and so for each direction there must be a line in that direction containing at least two points from the set. Consequently, we will assume that $m,n \geq 2$ and $mn<p$. Translating preserves the number of directions, and so we will assume $0 \in B$. 

The R\'edei polynomial of $A \times B$ is 
\[ H(x,y) = \prod_{i,j } (x+a_iy-b_j) .\]
Let $A_y = \{-a_iy+b_j \colon 1\le i\le m; 1\le j\le n \}$. Put $k = p-mn$ and define
\[ f(x,y) = x^k - \sigma_1(GF(p) \setminus A_y)x^{k-1} + \sigma_2(GF(p) \setminus A_y)x^{k-2} + \cdots + (-1)^k\sigma_k(GF(p) \setminus A_y).\]
We will consider the R\'edei polynomial in the horizontal direction, $y = 0$. Let 
\begin{equation}\label{red0} H(x,0)f(x,0) = f(x,0)\prod_j (x-b_j)^m =  x^p + c_1x^{p-1} + c_2x^{p-2} + \cdots + c_p, 
\end{equation}
for some coefficients $c_1,\ldots,c_p \in GF(p)$. We will exploit the product structure of the polynomial above to obtain our result. We begin with a lemma.

\begin{lemma}\label{lem} Let $R,S \in GF(p)[x]$ be polynomials each with constant term equal to $1$ and $\deg{R} \geq 1$. Suppose that $R$ and $R'$ are relatively prime and that $R$ does not divide $S$. Then $x^{\text{deg}(R)+\text{deg}(S)+1}$ does not divide $R^m(x)S(x)-1$ for any positive integer $m$ such that $p$ does not divide $m$. 
\end{lemma}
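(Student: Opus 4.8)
The plan is to argue by contradiction, converting the divisibility hypothesis into a purely formal differential identity and then into an impossible degree constraint. Write $a = \deg R$ and $b = \deg S$. Since both $R$ and $S$ have constant term $1$, the product $R^m S$ also has constant term $1$, so $R^m S - 1$ is divisible by $x$; the assertion of the lemma is precisely that this divisibility cannot be pushed as far as the exponent $a + b + 1$. So suppose for contradiction that $x^{a+b+1} \mid R^m S - 1$.

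First I would differentiate. For any polynomial $F$ and any integer $N \geq 1$, divisibility $x^N \mid F$ forces $x^{N-1} \mid F'$, as one sees by writing $F = x^N g$ and applying the product rule; crucially this holds in every characteristic, even when $p \mid N$. Applying it to $F = R^m S - 1$ yields $x^{a+b} \mid (R^m S)'$. Next I would factor the derivative using the chain and product rules as
\[ (R^m S)' = R^{m-1}\bigl( m R' S + R S' \bigr). \]
Since $R$ has constant term $1$, so does $R^{m-1}$, hence $x \nmid R^{m-1}$; stripping this coprime factor off leaves $x^{a+b} \mid m R' S + R S'$.

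The decisive step is a degree count. As $\deg(m R' S) \leq (a-1) + b$ and $\deg(R S') \leq a + (b-1)$, the polynomial $m R' S + R S'$ has degree at most $a + b - 1$, so being divisible by $x^{a+b}$ forces it to vanish identically: $m R' S = -R S'$. The right-hand side is divisible by $R$, so $R \mid m R' S$. Here the two hypotheses enter exactly: because $p \nmid m$ the scalar $m$ is a unit in $GF(p)$, and because $\gcd(R, R') = 1$ we have $\gcd(R, m R') = 1$, so unique factorization in $GF(p)[x]$ gives $R \mid S$, contradicting $R \nmid S$.

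I do not expect a serious obstacle once differentiation is identified as the right move; the only points needing care are verifying that the differentiation step loses no information in characteristic $p$ (handled by the characteristic-free identity above) and checking that the hypotheses are used sharply. Note in particular that $\gcd(R, R') = 1$ together with $\deg R \geq 1$ guarantees $R' \neq 0$, so the identity $m R' S = -R S'$ is not vacuous, and that $p \nmid m$ is exactly what prevents the $m R' S$ term from collapsing in $GF(p)$, which would otherwise reduce the divisibility to the automatic statement $R \mid R S'$ and break the argument.
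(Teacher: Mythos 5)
Your proof is correct and follows essentially the same route as the paper: differentiate the identity $R^mS - 1 = x^{a+b+1}P$, strip off the unit-constant-term factor $R^{m-1}$, kill $mR'S + RS'$ by a degree count, and use $\gcd(R,R')=1$ together with $p \nmid m$ to force $R \mid S$. Your explicit remark that $x^N \mid F$ implies $x^{N-1} \mid F'$ in every characteristic is a slightly cleaner justification of the differentiation step than the paper's, but the argument is the same.
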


\begin{proof} Suppose for a contradiction that there exist $R$, $S$, and $m$ satisfying the conditions of the Lemma. Then there exists a polynomial $P(x) \in GF(p)[x]$ such that 
\begin{equation}\label{lemeq} R^m(x) S(x) = 1 + x^{\text{deg}(R)+\text{deg}(S)+1}P(x) . 
\end{equation}
Let $k = \text{deg}(R)$ and $n = \text{deg}(S)$. By differentiating (\ref{lemeq}) we obtain 
\[ R^{m-1}(x) ( mR'(x)S(x) + R(x) S'(x) ) = x^{k+n} ((k+n+1)P(x) + xP'(x)).\]
Since the constant term in $R^{m-1}(x)$ is 1, we see that $x^{k+n}$ divides $mR'(x)g(x) + R(x) S'(x)$. But the degree of $mR'(x)S(x) + R(x) S'(x)$ is at most $k+n-1$ and so $mR'(x)S(x) + R(x) S'(x) = 0$. Since $R$ and $R'$ are relatively prime, it must be the case that $R$ divides $mS$. Since $m \neq 0$ in $GF(p)$ we see $R$ divides $S$, a contradiction. 

\end{proof}

\begin{proof}[Proof of Theorem~\ref{t1}]
Recall that if $c_i \neq 0$, then there are at least $p - i+1$ directions in $A \times B$. Suppose for a contradiction that $c_1 = c_2 = \ldots = c_{k+n-1} = 0$. Put $R(y) = \prod_{j=1}^n (1-b_jy)$, and $S(y) = y^kf(y^{-1},0)$. We see that $R(y),S(y) \in GF(p)[y]$, deg$(R) = n-1$ and deg$(S) \leq k$. Substitute $x = y^{-1}$ in (\ref{red0}) and multiply by $y^p$ to obtain
\[ R^m(y)S(y) = 1 + c_1y + c_2y^2 + \cdots + c_py^p = 1 + y^{k+n} Q(y),\]
for some polynomial $Q(y) \in GF(p)[y]$. Since the elements of $B$ are distinct, all roots of $R$ have multiplicity 1, and so $R$ is relatively prime to $R'$. Let $q$ be the highest power of $R$ dividing $S$. From the above we have 
\begin{equation*}\label{aplem} R^{m+q} \left( \frac{S}{R^q} \right) = 1+y^{k-q(n-1)+n}[y^{q(n-1)}Q(y)].\end{equation*}
We have the following relations on the above variables. 
\[ mn+k = p, \quad \text{and} \quad k-q(n-1) \geq 0.\]
It is easy to obtain the relation $m+q \leq p - m/(n-1)<p$ from the above. Therefore by Lemma~\ref{lem} we conclude that $\deg{R} = 0$, i.e. $R(y) = 1$. This gives $B = \{0\}$, which is a contradiction since we assumed $|B| \geq 2$. It follows that at least one of $c_1,\ldots,c_{k+n-1}$ is nonzero, and so there are at least $p-k-n+2 = mn-n+2$ directions in $A \times B$. By rotating the affine plane $90^{\circ}$ and repeating the argument we obtain the result
\begin{equation*}
\# \{ \text{Directions in } A \times B \} \geq  |A||B| - \min\{|A|,|B|\} + 2 .
\end{equation*}
\end{proof}

We remark that in the proof of Theorem~\ref{t1}, Lemma 6 could be substituted by a similar and simple different argument. We conclude with a proof of Corollary~\ref{cu}. 

\begin{proof}[Proof of Corollary~\ref{cu}] Let $A,B \subset GF(p)$ be as in the statement of the corollary. Let $|A| = m$, $|B|=n$, and put $A = \{a_i\}_{i=1}^m$, $B = \{b_j\}_{j=1}^n$, and $U =A \times A \cup B \times B$. Our strategy will be to bound the number of directions in $U$. Consequently, we can assume $0 \in A$ by translating $U$. Without loss of generality, we'll assume $m \geq n$. The R\'edei polynomial $H(x,y)$ of $U$ evaluated at $y=0$ is 
\[ H(x,0) = \prod_{i=1}^m (x-a_i)^m \prod_{j=1}^n (x-b_j)^n .\]
Let $f(x,y)$ be the polynomial defined in (\ref{extpoly}) corresponding to $U$. Define $k = \deg(f) = p-m^2-n^2$. Put $H(x,0)f(x,0) = x^p + G(x)$ for some $G[x] \in GF(p)[x]$. Define $R(y) = \prod_{i=1}^m (1-a_iy) \prod_{j=1}^n (1-b_jy)$, and $S(y) = y^kf(y^{-1},0) \left( \prod_{i=1}^m (1-a_iy) \right)^{m-n}$. Note that $R(y),S(y) \in GF(p)[y]$, $\deg(R) =m+n-1$, and $\deg{S}  \leq k + (m-1)(m-n)$. We make a similar substitution to that in the proof above as follows.

\[ y^p H(y^{-1},0)f(y^{-1},0) = R^n(y)S(y)  = 1 + y^p G(y^{-1}) .\]

Let $q$ be the highest power of $R$ dividing $S$. The above gives
\begin{equation}\label{coreq} R^{m+q} \left( \frac{S}{R^q} \right) =1 + y^p G(y^{-1}) .\end{equation}
The following relations hold. 
\[ m^2+n^2+k = p, \quad \text{and} \quad k+(m-1)(m-n) - q(m+n-1) \geq 0.\]
It is easy to check that the above implies $m+q <p$. The lowest degree term in $y^{p}G(y^{-1})$ is $p - \deg(G)$. Applying Lemma~\ref{lem} to (\ref{coreq}) gives  
\[ p - \deg{G} \leq k + (m-1)(m-n) + m+n-1.\]
Recall that the number of directions in $U$ is at least $\deg(G) +1$. Therefore $U$ determines at least $n^2+mn-2n+2$ directions. Every direction in $U$ is a quotient of two squares, or two non-squares in $GF(p)$. Hence all directions are squares, or zero, or infinity. This amounts to no more than $\frac{p+3}{2}$ directions, thereby giving the required result.

\end{proof}

\section{Concluding remarks}

The bound of Theorem~\ref{t1} is sometimes tight. For example if $p = 41$ and $A = \{0,1,5,9,10\}$, i.e. a maximal Paley clique, then the directions determined by $A \times A$ are the quadratic residues and $0$ and $\infty$. This totals $22$ directions, matching the lower bound $5^2 - 5 +2=22$ given by Theorem~\ref{t1}. Interestingly, this is the largest square grid we have found in which Theorem~\ref{t1} is tight. An infinite class of examples achieving exactly the lower bound are the long rectangles $A=\{0,1\}$, $B = \{0,1,\ldots,n-1\}$, and $p > 2n$ or $A = \{0,1,2\}$, $B = \{0,1,\ldots,n-1\}$, $n$ odd, and $p>3n$. 

It is worth noting that in the proof of Theorem~\ref{t1}, to show that the number of directions determined by $A \times B$ is at least $mn-n+2$, we used that the R\'edei polynomial at $y=0$ was of the form $H(x,0)=\prod_j (x-b_j)^m$. A set of points in $AG(2,p)$ has a R\'edei polynomial of this form if $n$ horizontal lines each contain exactly $m$ points, and so does not necessarily need to be a Cartesian product. For example, let $A = \{a_i\}_{i=1}^n$, $B = \{b_i\}_{i=1}^n$, be subsets of $GF(p)$ such that $n^2<p$ and $0 \not\in B$. Consider the following sets in $AG(2,p)$ 
\begin{enumerate}[i.]
    \item $\{(a_i+a_j^2 , a_j) \colon 1 \leq i,j \leq n\}$,
    \item $\{(b_i+b_j^{-1} , b_j) \colon 1 \leq i,j \leq n\}$.
\end{enumerate}
Note that the vertical direction is not necessarily determined by either of the above sets, and so the number of directions determined is only at least $n^2-n+1$. The below sets describe the reciprocal directions of the sets above, but exclude $\infty$ (formerly the direction $0$). Therefore the sets below each have size at least $n^2-n$. 
\begin{enumerate}[I.]
    \item $\{(x-y)(z-w)^{-1} + (z+w) \colon w,x,y,z \in A, z \neq w \}$,
    \item $\{(x-y)(z-w)^{-1} - (zw)^{-1} \colon w,x,y,z \in B, z \neq w \}$.
\end{enumerate}

\section{Acknowledgements} The research of the first author was supported in part by a Four Year Doctoral Fellowship from the University of British Columbia. The research of the second author was supported in part by an NSERC Discovery grant and OTKA K 119528 grant. The work of the second author was also supported by the European Research Council (ERC) under the European Union's Horizon 2020 research and innovation programme (grant agreement No. 741420, 617747, 648017). The research of the third author was supported in part by Killam and NSERC doctoral scholarships. We also thank Sammy Luo for helpful comments. 


\end{document}